
\documentclass[12pt,a4paper]{article}
\usepackage{amsmath,amssymb,amsthm,url,amsfonts,enumerate,graphicx}
\usepackage[a4paper,left=2.7cm,right=2.1cm,top=2cm,bottom=3.2cm]{geometry}

\usepackage[usenames,dvipsnames]{color}

\newcommand\aronly[1]{#1}
\newcommand\jonly[1]{}

\def\Z{{\mathbb Z}} \def\R{{\mathbb R}}  
\long\def\comment#1\endcomment{}

\def\sgn{\mathop{\fam0 sgn}}
\def\con{\mathop{\fam0 Con}}

\def\lk{\mathop{\fam0 lk}}
\def\Int{\mathop{\fam0 Int}}

\def\Cl{\mathop{\fam0 Cl}}
\def\im{\mathop{\fam0 im}}

\renewcommand{\t}[1]{\ensuremath{\widetilde{#1}}}

\theoremstyle{plain}
\newtheorem{theorem}{Theorem}[section]
\newtheorem{lemma}[theorem]{Lemma}

\newtheorem{proposition}[theorem]{Proposition}
\newtheorem{conjecture}[theorem]{Conjecture}

\newtheorem{problem}[theorem]{Problem}

\theoremstyle{definition}
\newtheorem{remark}[theorem]{Remark}

\usepackage{hyperref}
\hypersetup{
   colorlinks   = true, 
   urlcolor     = blue, 
   linkcolor    = blue, 
   citecolor   = red 
}

\begin{document}

\title{Some `converses' to intrinsic linking theorems
\footnote{We would like to thank F. Frick, T. Garaev and anonymous referees for helpful discussions.}}
\author{R. Karasev\footnote{Institute for Information Transmission Problems.
\texttt{http://www.rkarasev.ru/en/about/}.
Email: \texttt{r\_n\_karasev@mail.ru}.
\aronly{Supported by
the Russian Foundation for Basic Research grant 
19-01-00169.}}
 and A. Skopenkov\footnote{Independent University of Moscow.
\texttt{https://users.mccme.ru/skopenko/}.
Email: \texttt{skopenko@mccme.ru}.
\aronly{Supported by the Russian Foundation for Basic Research grant 19-01-00169.}}}
\date{}
\maketitle

\begin{abstract}
A low-dimensional version of our main result is the following `converse' of the Conway--Gordon--Sachs Theorem
on intrinsic linking of the graph $K_6$ in 3-space:

{\it For any integer $z$ there are 6 points $1,2,3,4,5,6$ in 3-space, of which every two $i,j$ are joined by a polygonal line $ij$, the interior of one polygonal line is disjoint with any other polygonal line, the linking number of any pair of disjoint 3-cycles except for $\{123,456\}$ is zero, and for the exceptional pair $\{123,456\}$ is $2z+1$.}

We prove a higher-dimensional analogue, which is a `converse' of a lemma by Segal--Spie\.z.
\end{abstract}

\noindent
{\em MSC 2010}: 57Q35, 57K45, 55S91, 68U05.

\noindent
{\em Keywords:} intrinsic linking, linking number, embedding, almost embedding, deleted product.


\section{Introduction and main result}\label{s:mr}

We start with a low-dimensional intrinsic linking result (Theorem \ref{t:cgs}), its higher-dimensional generalization (Lemma \ref{l:ramsey}), and a low-dimensional analogue (Proposition \ref{p:rams}) of our main result (Theorem \ref{t:ss1ae}).

Disjoint closed polygonal lines $L_1,L_2$ in $\R^3$ (or, more generally, disjoint self-intersecting $k$-sphere and $\ell$-sphere in $\R^{k+\ell+1}$)
are {\it linked modulo 2} if a general position singular cone over $L_1$ intersects $L_2$ at an odd number of points \cite[\S77]{ST80}, \cite[\S4]{Sk}.

\begin{theorem}[Conway--Gordon--Sachs; \cite{CG83, Sa81}]\label{t:cgs}
For any piecewise linear (PL) embedding  $K_6\to \R^3$ there are two disjoint cycles in $K_6$ whose images are  linked modulo 2.
\end{theorem}

For
a survey on `intrinsic linking' results see e.g. \cite{Sk14} and the references therein.

The {\it linking number} $\lk\in\Z$ of disjoint oriented closed polygonal lines in $\R^3$ (or, more generally, of disjoint oriented self-intersecting $k$-sphere and $\ell$-sphere in $\R^{k+\ell+1}$),
is defined in \cite[\S77]{ST80}, \cite[\S4]{Sk}.
For non-oriented closed polygonal lines (or singular spheres) the absolute value $|\lk|$ is well-defined.

This paper is motivated by finding a gap \cite[\S3]{Sk20e} in the proof that embeddability is undecidable in codimension $>1$ \cite{FWZ}.
Theorem \ref{t:cgs} and its higher-dimensional generalization (Lemma \ref{l:ramsey}) give cycles (or spheres) linked modulo 2, i.e., having odd linking number.
The gap was in trying to improve those results to get linking number $\pm1$, not just odd.
Our main result (Theorem \ref{t:ss1ae}) shows that this is not possible.


The following `converse' of Theorem \ref{t:cgs} shows that the existence of two cycles with odd linking number cannot be replaced by the existence of two cycles with $\pm1$ linking number.

\begin{proposition}[proved in \S\ref{s:pr}]\label{p:rams}
For any integer $z\ge0$ there is a PL embedding $K_6\to\R^3$ such that

$\bullet$ the image of any 3-cycle is unknotted,

$\bullet$ for any disjoint 3-cycles in $K_6$ except one pair
the linking number of their images is zero, and

$\bullet$ for the exceptional pair of disjoint 3-cycles we have $|\lk|=2z+1$.
\end{proposition}


A {\bf complex} is a collection of closed  simplices (=faces) of some simplex.
(We abbreviate `finite simplicial complex' to `complex'.)
A {\bf $k$-complex} is a complex containing at most $k$-dimensional simplices.
The {\it body} (or geometric realization) $|K|$ of a complex $K$ is the union of simplices of $K$.
Thus continuous or piecewise-linear (PL) maps $|K|\to\R^d$ and continuous maps $|K|\to S^m$ are defined.
Below we abbreviate $|K|$ to $K$; no confusion should arise.

A map $g:K\to\R^d$ of a complex $K$ is called an {\bf almost embedding} if $g\alpha\cap g\beta=\emptyset$ for any two disjoint simplices $\alpha,\beta\subset K$.

\begin{lemma}[{\cite[Lemma 1.4]{SS92}}]\label{l:ramsey} For any integers $0\le\ell\le k$ there is a complex
$F_-$ of dimension $\max\{k,\ell+1\}$ containing disjoint subcomplexes $\Sigma^k\cong S^k$ and
$\Sigma^\ell\cong S^\ell$, PL embeddable into $\R^{k+\ell+1}$ and such that for any PL almost embedding $f:F_-\to\R^{k+\ell+1}$ the images $f\Sigma^k$ and $f\Sigma^\ell$ are linked modulo 2.
\end{lemma}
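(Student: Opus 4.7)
The plan is to construct $F_-$ as a $k$-dimensional complex built on two disjoint vertex sets $V_k$ and $V_\ell$ of sizes $k+2$ and $\ell+2$, with $\Sigma^k := \partial\Delta[V_k]$ and $\Sigma^\ell := \partial\Delta[V_\ell]$ as the distinguished subcomplexes. A natural first candidate is the $k$-skeleton of the simplicial join $\partial\Delta[V_k] * \partial\Delta[V_\ell]$; geometrically this join realizes $S^k * S^\ell \cong S^{k+\ell+1}$, it visibly contains both $\Sigma^k$ and $\Sigma^\ell$, and in the case $k=\ell=1$ its $1$-skeleton is precisely $K_6$, so Lemma~\ref{l:ramsey} would recover a weaker form of Theorem~\ref{t:cgs}, as the preamble to the lemma promises.

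First I would verify that (possibly after removing a controlled collection of top-dimensional faces) $F_-$ PL-embeds into $\R^{k+\ell+1}$. The direct model places $V_k$ in general position on a $(k+1)$-dimensional affine subspace and $V_\ell$ in general position on a disjoint, transverse $(\ell+1)$-dimensional affine subspace of $\R^{k+\ell+1}$, and then extends linearly; the check is that no two disjoint $k$-simplices of $F_-$ have intersecting linear realizations. In degenerate ranges---notably $k=1,\ \ell=0$, where the full $k$-skeleton of the join is $K_5$---the naive candidate fails to embed, and a trimming (removing some top-dimensional simplices while retaining $\Sigma^k$ and $\Sigma^\ell$) is required.

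Second, I would show that for every PL almost embedding $f$ the mod-$2$ linking $\lk(f\Sigma^k, f\Sigma^\ell)$ is odd. Following the Conway--Gordon--Sachs strategy, one interprets this parity as a universal sum over the combinatorics of $F_-$ which is invariant under PL homotopies of $f$ through general-position maps (each elementary crossing change modifies the sum by an even number), and then evaluates the sum on the explicit embedding constructed in the previous step. Because $F_-$ is designed so that $(\Sigma^k, \Sigma^\ell)$ is essentially the only pair of disjoint subspheres contributing nontrivially, the total sum reduces to $\lk(f\Sigma^k, f\Sigma^\ell) \pmod 2$.

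The main obstacle is the first step: finding $F_-$ simultaneously large enough to force mod-$2$ linking of the specified pair $(\Sigma^k,\Sigma^\ell)$ and small enough to embed in $\R^{k+\ell+1}$, whose codimension $\ell+1$ approaches the van Kampen threshold when $\ell$ is small. An inductive construction on $k-\ell$, or a case split isolating small $\ell$, seems unavoidable.
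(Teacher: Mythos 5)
There is a genuine gap, and it lies in your second step, not the first. Your candidate complex, the $k$-skeleton of $\partial\Delta[V_k]*\partial\Delta[V_\ell]$, coincides with the paper's $F_{k,\ell,-}$ only in the extreme case $\ell=k-1$ (then both are the $k$-skeleton of $\Delta^{2k+2}$ minus one $k$-face; e.g.\ for $(k,\ell)=(2,1)$ they agree). For $\ell\le k-2$ it is a different complex, and the invariance you invoke (``each elementary crossing change modifies the sum by an even number'') fails for it. Concretely, take $(k,\ell)=(3,1)$, $V_k=\{1,\dots,5\}$, $V_\ell=\{6,7,8\}$, $d=k+\ell+1=5$, and the disjoint triangles $\sigma=\{1,2,3\}$, $\tau=\{4,6,7\}$, so $\dim\sigma+\dim\tau=d-1$. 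The $3$-faces of your complex containing $\sigma$ and disjoint from $\tau$ are $\{1,2,3,5\}$ and $\{1,2,3,8\}$ (two of them), while the only $3$-face containing $\tau$ and disjoint from $\sigma$ is $\{4,5,6,7\}$, since $\{4,6,7,8\}\supset V_\ell$ is not a face of the join. Hence a generic homotopy pushing $f\sigma$ once across $f\tau$ changes the mod~$2$ count of intersections between disjoint faces with dimension sum $d$ by $2+1\equiv 1$: exactly the parity hypothesis of Lemma~\ref{l:pa} (from \cite{ST17}) is violated, so no Conway--Gordon--Sachs-type invariant of the kind you describe exists for your complex, and the plan collapses for $\ell\le k-2$. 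This is precisely what the paper's design avoids: in $F_{k,\ell}$ the star of the apex $0$ is truncated at dimension $\ell+1$, which keeps $\dim\alpha+\dim\beta\le k+\ell+1$ for all disjoint faces, makes $F$ Alexander self-dual, and makes the hypothesis of Lemma~\ref{l:pa} hold, so that $v(f)=1$ for every general position map once it equals $1$ for the Segal--Spie\.z map of Lemma~\ref{l:ss}.a. Your join-skeleton has no such apex, and no removal of top-dimensional faces alone produces one.

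Two further points. Your localization claim that $(\Sigma^k,\Sigma^\ell)$ is ``essentially the only pair of disjoint subspheres'' is false even where your complex is the correct one: for $(k,\ell)=(2,1)$, $\partial\Delta[\{1,2,3,5\}]$ and the cycle on $\{4,6,7\}$ form another disjoint pair (this is also why your $K_6$ motivation for $k=\ell=1$ is misleading: for $K_6$ the conclusion about one prescribed pair is false, and the paper's $k=\ell=1$ complex is the cone over $K_5$, not $K_6$). The mechanism that actually localizes the statement to $(\Sigma^k,\Sigma^\ell)$, absent from your plan, is to add back the $(\ell+1)$-cell with boundary $\Sigma^\ell$, extend an almost embedding of $F_-$ by coning over $f\Sigma^\ell$, and note that all pairs inside $F_-$ contribute $0$ by the almost-embedding hypothesis while the only $k$-faces disjoint from the added cell are those of $\Sigma^k$; thus the van Kampen count of the extension equals the mod~$2$ linking of the distinguished pair, and the entire burden falls on the invariance statement that fails above. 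Finally, what you call the main obstacle, embeddability, is the easy part: your $k$-skeleton is a proper subcomplex of the triangulated sphere $\partial\Delta[V_k]*\partial\Delta[V_\ell]\cong S^{k+\ell+1}$ missing a top cell, hence embeds in $\R^{k+\ell+1}$ (and for $(k,\ell)=(1,0)$ it is $K_5$ minus an edge, which is planar, not $K_5$); for the paper's $F_-$ embeddability with odd linking is Lemma~\ref{l:ss}.b. So the content genuinely missing from your proposal is exactly what the paper supplies: a complex engineered to satisfy the parity hypothesis of Lemma~\ref{l:pa}, the base computation of Lemma~\ref{l:ss}.a, and the coning reduction from linking to the van Kampen number.
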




Let us define $F_-,\Sigma^k$, and $\Sigma^\ell$ of Lemma \ref{l:ramsey}.
For this, define a complex $F=F_{k,\ell}$ (this is $P(k,\ell)$ of \cite{SS92}).
Let $[n]:=\{1,2,\ldots,n\}$.
The vertex set is $[k+\ell+3]\cup\{0\}$.
The simplices are formed by all the simplices of dimension at most $k$ of $[k+\ell+3]$, and all the simplices of dimension at most $\ell+1$ that contain $0$.
In other words,
$$F_{k,\ell} := \left(\ [k+\ell+3]\cup\{0\}\ ,
\ {[k+\ell+3]\choose\le k+1}\cup\left\{\{0\}\cup\sigma\ :\ \sigma\in{[k+\ell+3]\choose\le \ell+1}\right\}\ \right).$$
Here ${[n]\choose \le m}$ is the set of all subsets of $[n]$ having at most $m$ elements.

\smallskip
{\it Comment.} Observe that $F_{1,0}$ is the non-planar graph $K_5$.
More generally, $F_{k,k-1}$ is the $k$-skeleton of the $(2k+2)$-simplex, which is not embeddable into $\R^{2k}$.
We have
$F_{k,k}=\con F_{k,k-1}$.
The complex $F_{k,\ell}$ is not embeddable into $\R^{k+\ell+1}$ for $0\le\ell\le k$ by Lemma \ref{l:ramsey}.
(see the following definition of $\Sigma^\ell$, $\Sigma^k$ and $F_-$).

\smallskip
Let $\Delta^{\ell+1}\subset F$ be the $(\ell+1)$-simplex with the vertex set $\{0,1,2,\ldots,\ell+1\}$ and $\Sigma^\ell=\partial\Delta^{\ell+1}$.
Let $\Sigma^k\subset F$ be the boundary sphere of the $(k+1)$-simplex with the vertex set $\{\ell+2,\ell+3,\ldots,k+\ell+3\}$.
Finally, define
$$F_-=F_{k,\ell,-}:=F-\Int\Delta^{\ell+1}.$$
Our main result (Theorem \ref{t:ss1ae}) shows that in Lemma \ref{l:ramsey} the linking number being odd cannot be replaced by the linking number being $\pm1$.

For a PL almost embedding $f:F_-\to\R^{k+\ell+1}$ we have $f\Sigma^k\cap f\Sigma^\ell=\emptyset$, so denote
$$|\lk f|:=|\lk(f\Sigma^k,f\Sigma^\ell)|\in\Z.$$

\begin{theorem}[proved in \S\ref{s:pr}]\label{t:ss1ae}
For any integers $1\le\ell\le k$ and $z\ge0$ there is a PL almost embedding $f:F_-\to\R^{k+\ell+1}$ such that
$|\lk f|=2z+1$.
\end{theorem}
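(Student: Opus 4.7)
The plan is to start from a PL embedding of $F_-$ into $\R^{k+\ell+1}$ provided by Lemma~\ref{l:ramsey} and then iteratively modify it by a local move that shifts $\lk f$ by $\pm 2$ until the target value $2z+1$ is reached. The embeddability clause of Lemma~\ref{l:ramsey} gives a PL embedding $f_0:F_-\to\R^{k+\ell+1}$; the linking clause then forces $\lk f_0=2z_0+1$ for some $z_0\in\Z$. Hence it suffices to construct, from any PL almost embedding $f$ with $\lk f=2z+1$, a PL almost embedding $f'$ with $\lk f'=\lk f+2$ (and symmetrically one with $\lk f-2$). Iterating $|z-z_0|$ such moves in the appropriate direction produces the desired $f$.

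To describe the local move, choose a simplex $\alpha$ of $\Sigma^k$ or $\Sigma^\ell$ that has no coface in $F_-$. Such $\alpha$ exists: take $\alpha=\{\ell+2,\ldots,k+\ell+2\}\subset\Sigma^k$ when $k>\ell$ and $\alpha=\{1,\ldots,\ell+1\}\subset\Sigma^\ell$ when $k=\ell$; a direct inspection of the dimension constraints defining $F_{k,\ell}$ confirms that in both cases $\alpha$ has no coface in $F_-$ (in the case $k=\ell$ the only candidate $\Delta^{\ell+1}$ has been excised). Let $\Sigma^*$ denote the sphere among $\Sigma^k,\Sigma^\ell$ not containing $\alpha$. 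The local move replaces $f|_\alpha$ with its PL connected sum $f(\alpha)\# S$ with a PL $(\dim\alpha)$-sphere $S\subset U:=\R^{k+\ell+1}\setminus f(F_-\setminus\Int\alpha)$ satisfying $\lk(S,f\Sigma^*)=2$, glued via a thin PL tube along a PL arc $\gamma\subset U$. Such $\gamma$ exists because $f(F_-\setminus\Int\alpha)$ has codimension $\ge\ell+1\ge 2$ in $\R^{k+\ell+1}$, so $U$ is path connected; the tube can be made thin enough to remain in $U$ by compactness. Setting $f'=f$ outside a neighborhood of $\alpha\cup\gamma$ and $f'|_\alpha$ equal to the PL realization of $f(\alpha)\# S$ yields a PL almost embedding $f'$, and additivity of linking under connected sum gives $\lk f'=\lk f+\lk(S,f\Sigma^*)=\lk f+2$.

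The main obstacle is producing the sphere $S$ in $U$ with $\lk(S,f\Sigma^*)=2$. One constructs $S=\partial D$ as the boundary of a PL ``dumbbell'' $(\dim\alpha+1)$-chain $D$ whose interior meets $f\Sigma^*$ transversely in two points of matching sign. When $\ell<k$, the $\Sigma^*$-simplices have cofaces in $F_-$ that can obstruct a naively placed meridional dumbbell, and $S$ must be pushed off these cofaces by a codimension-$\ge 2$ general position argument in $\R^{k+\ell+1}\setminus f\Sigma^*$, preserving its linking class in $H_{\dim\alpha}(\R^{k+\ell+1}\setminus f\Sigma^*)\cong\Z$. Once $S$ is placed in $U$, the tubing and connected sum are standard.
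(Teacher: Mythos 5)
Your reduction to a ``local move'' hides the real difficulty in the single sentence asserting that the sphere $S$ can be ``pushed off these cofaces by a codimension-$\ge2$ general position argument''. That argument is dimensionally false. In your main case $k>\ell$ you take $\alpha$ to be a $k$-simplex of $\Sigma^k$, so $S$ is a $k$-sphere which must avoid $f(F_-\setminus\Int\alpha)$ (a $k$-complex), or at the very least the union $L$ of the images of all simplices disjoint from $\alpha$ (an $(\ell+1)$-complex containing $\Sigma^\ell$). In $\R^{k+\ell+1}$ general position gives intersections of dimension $k+k-(k+\ell+1)=k-\ell-1\ge0$ in the first case and $k+(\ell+1)-(k+\ell+1)=0$ in the second; in neither case are they empty. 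The codimension-$\ge2$ observation only gives connectivity of the complement (enough for the arc $\gamma$), not disjointness of $S$ from the rest of the complex. Removing those residual intersection points while keeping the linking class is exactly the hard content of the theorem: it is what the paper's appendix proof buys with the Disjunction Theorem of \cite{Sk02} (under the restriction $\ell\ge2$, via Proposition \ref{p:delp} and an equivariant-map modification on the deleted product), and what the main proof avoids altogether by inducting on $(k,\ell)$ with the $K_6$ construction (Proposition \ref{p:rams}) as base. Note also that if your move really produced an embedded $S$ and tube disjoint from everything, then starting from the embedding of Lemma \ref{l:ss}.b it would yield embeddings with arbitrary odd $\lk$, i.e.\ the conjecture of Remark \ref{t:ss1}.b, which the authors state as open for $k\ge2\ell$ --- a strong sign that the disjointness of $S$ cannot come for free.

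Two smaller points. First, the case $k=\ell$ (allowed in Theorem \ref{t:ss1ae}) is not covered by your starting point: Lemma \ref{l:ramsey} (and Lemma \ref{l:ss}) are only stated for $\ell<k$, so you have no initial map with odd $\lk$ there; the paper gets $k=\ell$ by the inductive step that lowers $\ell$, with base $k=\ell=1$ from Proposition \ref{p:rams}. Second, your iteration needs both $+2$ and $-2$ moves, so the existence problem for $S$ must be solved with either sign; this does not create a new difficulty beyond the one above, but it is part of what must be proved. As it stands the proposal is a plausible plan whose key step (existence of $S$ in the complement with prescribed linking number) is unproved and cannot be fixed by general position alone.
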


\subsection*{Open problems}


The paper \cite{Ga22} announces that the analogue for $(k,\ell)=(1,0)$ of Theorem \ref{t:ss1ae} is wrong.
It would be interesting to know if the analogue for $\ell=0$, $k\ge2$ of Theorem \ref{t:ss1ae} holds.


It would be interesting to know if

$\bullet$ for any graph $K$ there is a PL embedding $K\to\R^3$ such that for any disjoint cycles in $K$
the linking number of their images is different from $\pm1$.

$\bullet$ for any integers $0<\ell<k$ and a $k$-complex $K$ there is a PL embedding (or almost embedding) $K\to\R^{k+\ell+1}$ such that for any disjoint $k$-sphere and $\ell$-sphere in $K$
the linking number of their images is different from $\pm1$.


$\bullet$ for any integers $1<\ell\le k$ and a $k$-complex $K$ there is a PL embedding (or almost embedding)
$K\to\R^{k+\ell}$ such that for any disjoint $k$-simplex and $\ell$-simplex in $K$ the intersection of their images is not transversal.

A negative answer would be a natural integer-valued generalization of Theorem \ref{t:cgs} and Lemma \ref{l:ramsey}.


It would be interesting to know if `almost' can be deleted from Theorem \ref{t:ss1ae}.
For $k<2\ell$ this follows by Theorem \ref{t:ss1ae} and Proposition \ref{p:delp}.a.
For $k\ge2\ell$ this can perhaps be proved analogously to Theorem \ref{t:ss1ae}, using Lemma \ref{l:ss}.b and the method of \cite[\S2]{Sk98}, see the survey \cite[\S10]{RS99}.

\begin{problem}\label{t:ss1}
Take fixed $d,k$ such that $8\le d\le \frac{3k+1}2$.
Is there an algorithm recognizing PL almost embeddability of $k$-complexes in $\R^d$?
\end{problem}

Theorem \ref{t:ss1ae} allows to deduce a negative answer to Problem \ref{t:ss1} (cf. \cite{FWZ}, \cite[\S3]{Sk20e}) for the `extreme' case $2d=3k+1=6\ell+4$, $\ell$ even,
from \cite[Conjecture 4.8.b]{Sk20e}, see details in \cite[end of \S4]{Sk20e}.
Analogously one deduces such an answer
from \cite[Conjecture 4.8.a]{Sk20e}.
Analogously one deduces undecidability for embeddings
from a version of Theorem \ref{t:ss1ae} without `almost' and \cite[Conjecture 4.8.a]{Sk20e}.

\subsection*{More information on intrinsic linking}\label{s:intli}

\begin{remark}\label{r:ssl} (a) Lemma \ref{l:ramsey} is an important step in the proof of the important results \cite{SS92, SSS, MTW, ST17} on incompleteness of the deleted product criterion for embeddability of $k$-complexes in $\R^d$ for $2d<3k+3$ (Proposition \ref{p:delp}.a), and on NP-hardness of recognition of (almost) embeddability of $k$-complexes in $\R^d$ for $2d<3k+3$.

(b) The proof of Lemma \ref{l:ramsey} in \cite[\S1]{SS92} uses the cohomological Smith index;
a simpler argument by application of \cite[Lemma 6]{ST17} is presented after Theorem \ref{t:ramse}.
\jonly{See also \cite[Remark 1.6.c,d]{KS20}.}

\aronly{(c) Our statement of Lemma \ref{l:ramsey} does not coincide with \cite[Lemma 1.4]{SS92}.
So observe that the condition \cite[1.4.a]{SS92} can be achieved by general position, and the condition $l<k$ of \cite[Lemma 1.4]{SS92} can be replaced by $l\le k\ge2$ without changing the proof.
Indeed, the case $k=\ell=0$ is clear and the case $k-1=\ell=0$ follows by Theorem \ref{t:cgs}.

(d) The following phrase in \cite[p. 278]{SS92} requires an explanation:
`Since the map $p|_{\widetilde\Delta^{l+1}}$ induces an isomorphism
$H_{l+1} (\widetilde\Delta^{l+1}, \partial\widetilde\Delta^{l+1}) \to H_{l+1} (\Delta^{l+1}, \partial\Delta^{l+1})$
of the ordinary homology groups, the map $p^*$ induces an epimorphism
$H_{n+1} (\widetilde P^*, \widetilde T) \to H_{n+1} (P^*,T) $
of the equivariant homology groups.'
(In  \cite[p. 278]{SS92} $T^*$ is a typo and should be replaced by $T$.)}
\end{remark}


Theorem \ref{t:ss1ae} can be regarded as a `converse' also to the following `intrinsic linking' result, Theorem \ref{t:ramse}.
This result is a non-trivial generalization of Theorem \ref{t:cgs}, of Remark \ref{r:sgs}.a, and of their analogues for $k$-complexes in $\R^{2k+1}$, as well as a simple generalization of Lemma \ref{l:ramsey}   and of \cite[Lemma 5]{ST17}.

\begin{theorem}\label{t:ramse} For any integers $0\le\ell\le k$ there is a $k$-complex $F'=F'_{k,\ell}$ containing disjoint subcomplexes $\Sigma^k_j\cong S^k$ and $\Sigma^\ell_j\cong S^\ell$, $j\in{[k+\ell+3]\choose k+2}$,
PL embeddable into $\R^{k+\ell+1}$ and such that for any PL almost embedding $f:F'\to\R^{k+\ell+1}$
the number of linked modulo 2 unordered pairs of the images $f\Sigma^k_j$ and $f\Sigma^\ell_j$ is odd.
\end{theorem}

The above-mentioned analogues for $k$-complexes in $\R^{2k+1}$ are obtained by taking in Theorem \ref{t:ramse} $k=\ell$ and $F'$ the $k$-skeleton of the $(k+l+3)$-simplex.
They are proved in \cite{LS98, Ta00}, see survey \cite[\S4]{Sk16}.
The index argument of \cite[\S1]{SS92} (see Remark \ref{r:ssl}.c) has a simple generalization to Theorem \ref{t:ramse}; thus the analogues are implicit in \cite{SS92}.

\begin{proof}[Sketch of a proof of Theorem \ref{t:ramse}]
Take $F'$ to be the complex whose vertex set is $[k+\ell+3]\cup\{0\}$, and whose simplices are formed by all the simplices of dimension at most $k$ of $[k+\ell+3]$, and all the simplices of dimension at most $\ell$ that contain $0$.
For $j\in{[k+\ell+3]\choose k+2}$ let

$\bullet$ $\Sigma^k_j\subset F'$ be the boundary sphere of the $(k+1)$-simplex with the vertex set $j$.

$\bullet$ $\Sigma^\ell_j\subset F'$ be the boundary sphere of the $(\ell+1)$-simplex with the vertex set
$\{0\}\cup([k+\ell+3]-j)$.

Now the proof is analogous to the following proof of Lemma \ref{l:ramsey}, and so to \cite[\S3]{ST17}.
\end{proof}

\begin{proof}[Proof of Lemma \ref{l:ramsey}]
For a complex $K$, a general position PL map $f\colon K\to \R^d$ and $\dim K<d$ define {\it the van Kampen number} $v(f)\in\Z_2$ to be the parity of the number of points $x\in \R^d$ such that $x\in f(\sigma) \cap f(\tau)$ for some disjoint simplices $\sigma, \tau \in  K$ with $\dim \sigma + \dim \tau = d$.
The lemma follows because
{\it $v(f)=1$ for any general position PL map $f\colon F\to \R^{k+\ell+1}$.}
For {\it some} $f$ this is Lemma \ref{l:ss}.a below.
Then for {\it any} $f$ this holds by the following \cite[Lemma 6]{ST17} (verification of its assumptions is analogous to \cite[Lemma 7]{ST17}): {\it Let $d$ be an integer and $K$ a finite complex such that for every pair $\sigma, \tau$ of disjoint $s$- and $t$-simplices in $K$ with $s+t=d-1$ the following two numbers have the same parity:

$\bullet$ the number of $(s+1)$-simplices $\nu$ containing $\sigma$ and disjoint with $\tau$;

$\bullet$ the number of $(t+1)$-simplices $\mu$ containing $\tau$ and disjoint with $\sigma$.

Then $v(f)$ is independent of a general position PL map $f\colon |K|\to \R^d$.}
\end{proof}

\section{Proofs of Proposition \ref{p:rams} and Theorem \ref{t:ss1ae}}\label{s:pr}

For any disjoint oriented cycles $\sigma,\tau$ in $K_6$, put $\lk_f(\sigma,\tau):=\lk(f\sigma,f\tau)\in\Z$.
Observe that $\lk_f(\sigma,\tau)=\lk_f(\tau,\sigma)$, so assume that the argument of $\lk_f$ is an unordered pair.

For an oriented edge $c$ of $K_6$ issuing out of vertex $A$ and going to vertex $B$, and a vertex $C\not\in c$ denote by $cC$ the oriented cycle $CA\cup c\cup BC$ in $K_6$.

\begin{lemma}\label{l:kar} Let  $a,b$ be disjoint oriented edges of $K_6$ and $f:K_6\to\R^3$ a PL embedding such that any 3-cycle in $f(K_6)$ is unknotted.
Then there is a PL embedding $g:K_6\to\R^3$ such that any 3-cycle in $g(K_6)$ is unknotted,
for the remaining vertices $P,Q$ of $K_6$ we have
$$\lk\phantom{}_f(aP,bQ)-\lk\phantom{}_g(aP,bQ)=\lk\phantom{}_f(aQ,bP)-\lk\phantom{}_g(aQ,bP)=+1$$
and $\lk_f(\sigma,\tau)=\lk_g(\sigma,\tau)$ for any other unordered pair $\sigma,\tau$.
\end{lemma}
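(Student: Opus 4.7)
The plan is to produce $g$ from $f$ by a single PL finger move on the edge $a$: replace a short subarc of $f(a)$ by a detour that forms a small meridional loop around the edge $b$ at an interior point $p$. One move suffices because both cycles $bP$ and $bQ$ contain $b$ and, by the convention $bC'=C'B_1\cup b\cup B_2C'$, traverse it in the same direction, so a meridian of $b$ links them with the same sign.

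Write $a=A_1A_2$ and $b=B_1B_2$. Since $f(bQ)$ is unknotted, choose a PL spanning disk $D$ for $f(bQ)$ in general position with respect to $f(K_6)\setminus f(bQ)$, and a general-position PL arc $\beta$ from an interior point of $f(a)$ to an interior point $p$ of $f(b)$, with $\beta$ disjoint from $f(K_6)\cup D$ except at its endpoints. Replace the relevant subarc of $f(a)$ by a thin detour running along $\beta$, piercing $D$ transversally once, forming a meridional loop around $f(b)$ inside an arbitrarily thin tubular neighborhood of $f(b)$ near $p$, and returning along $\beta$; orient the finger so that its signed intersection with $D$ is $-1$. Let $g$ be the resulting PL embedding and $\delta:=g(a)-f(a)$ the added $1$-cycle.

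For any 3-cycle $\mu\ni a$ and any disjoint 3-cycle $\nu$, the change is $\lk_g(\mu,\nu)-\lk_f(\mu,\nu)=\lk(\delta,f(\nu))$. In $\R^3\setminus f(\nu)$ the two parallel traversals of $\beta$ in $\delta$ cancel, and the subarc of $f(a)$ in $\delta$ (disjoint from $f(\nu)$ since $\nu$ is disjoint from $a$) contracts, so $\delta$ is homologous to the meridional cap around $f(b)$ at $p$. If $\nu\not\supset b$, then $f(\nu)$ is disjoint from $f(b)$ and hence from a thin enough meridional disk at $p$, giving $\lk(\delta,f(\nu))=0$. If $\nu=bC'\supset b$, the disk meets $f(\nu)$ transversally exactly once at $p$; by the orientation convention above, the sign is independent of $C'$, and by our choice it equals $-1$.

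The modified cycles are $aC$ for $C\in\{B_1,B_2,P,Q\}$; for $\mu=aC$ to be disjoint from $\nu=bC'$, vertex-disjointness $\{A_1,A_2,C\}\cap\{B_1,B_2,C'\}=\emptyset$ forces $C,C'\in\{P,Q\}$ with $C\ne C'$, so only the pairs $(aP,bQ)$ and $(aQ,bP)$ survive and each picks up $\lk_f-\lk_g=+1$. All other pairs of disjoint 3-cycles are unchanged. Each $g(aC)$ is ambient isotopic to $f(aC)$ in $\R^3$ because the detour is a small locally unknotted loop that can be contracted by an isotopy in $\R^3$ (ignoring the other images), so all 3-cycles of $g(K_6)$ remain unknotted. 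The main subtlety is the homological argument that $\delta$ contributes zero linking to every cycle not containing $b$; this reduces to the standard fact that a small meridian of an unknotted PL circle in $\R^3$ links that circle by $\pm 1$ and any disjoint cycle by $0$.
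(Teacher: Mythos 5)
Your proof is correct and is essentially the paper's argument: both obtain $g$ from $f$ by a finger move that turns $f(a)$ once around $f(b)$, both observe that vertex-disjointness forces the only affected pairs to be $\{aP,bQ\}$ and $\{aQ,bP\}$, and both preserve unknottedness because the track of the move meets $f(K_6)$ only in $f(a\cup b)$ while no triangle contains both $a$ and $b$. The only (cosmetic) difference is the bookkeeping of the linking-number change: you use bilinearity of $\lk$ together with the fact that the difference cycle $g(a)-f(a)$ is homologous to a meridian of $f(b)$ in the complement of $f(\nu)$, whereas the paper decomposes $\lk_f(\sigma,\tau)$ into the nine terms $\lk(Of(d),Of(e))$ over edge pairs via a cone point $O$ and notes that only the $(a,b)$ term changes.
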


\begin{proof}
Informally, we obtain $g$ by turning $f(a)$ around $f(b)$ once.
Let us present an accurate construction.
Take a point $O\in\R^3$ and general position arcs $Of(V)$ joining $O$ to the images of the vertices of $K_6$.
For an oriented edge $c$ of $K_6$ issuing out of vertex $A$ and going to vertex $B$ denote by $Of(c)$ the oriented cycle $Of(A)\cup f(c)\cup f(B)O$.
Take an embedded oriented 2-disk $\delta\subset\R^3$ such that $\delta\cap f(K_6)$ is the union of

$\bullet$ a point $f(b)\cap\delta\subset\Int\delta$ of sign $+1$, and

$\bullet$ an arc  $f(a)\cap\partial\delta$ at which the orientations from $f(a)$ and from $\partial\delta$ are the opposite.

Define $g:K_6\to\R^3$ by `pushing a finger along $\delta$', i.e., so that

$\bullet$ $g(f^{-1}(\partial\delta))=\Cl(\partial\delta-f(a))$,

$\bullet$ $g=f$ outside $f^{-1}(\partial\delta)$,

$\bullet$ $\lk(Of(a),Of(b))-\lk(Og(a),Of(b))=+1$, and

$\bullet$ $\lk(Of(a),Of(c))=\lk(Og(a),Of(c))$ for any oriented edge $c\not\in\{a,b\}$.

Since any 3-cycle in $f(K_6)$ is unknotted, $f(K_6)\cap\delta\subset f(a\cup b)$ and no 3-cycle in $K_6$ containing $a$ contains $b$, any 3-cycle in $g(K_6)$ is unknotted.

Observe that $\lk_f(\sigma,\tau)$ equals to the sum of 9 summands of the form $\lk(Of(d),Of(e))$, where $d$ and $e$ are oriented edges of $\sigma$ and $\tau$.
Hence
$$\lk\phantom{}_f(aP,bQ)-\lk\phantom{}_g(aP,bQ)=\lk(Of(a),Of(b))-\lk(Og(a),Of(b))=\delta\cap f(b)=+1.$$
The relation $\lk_f(aQ,bP)-\lk_g(aQ,bP)=+1$ follows by exchanging $P$ and $Q$.
Analogously $\lk_f(\sigma,\tau)=\lk_g(\sigma,\tau)$ for any other unordered pair $\{\sigma,\tau\}$.
\end{proof}

\begin{proof}[Proof of Proposition \ref{p:rams}]
Denote the vertices of $K_6$ by $1,2,\ldots,6$.
It is known that
{\it there is a PL embedding $f:K_6\to\R^3$ such that any 3-cycle in $f(K_6)$ is unknotted, $\lk_f(123,456)=+1$
and $\lk_f(\sigma,\tau)=0$ for any other unordered pair $\sigma,\tau$ of disjoint oriented cycles in $K_6$.}

Make the modification of Lemma \ref{l:kar} for $(aP,bQ)=(123,456),(162,435),(234,561)$.
We have $\lk_f(ijk,pqr)=\lk_f(jki,pqr)=-\lk_f(jik,pqr)$ whenever $[6]=\{i,j,k,p,q,r\}$.
Hence the resulting change of the symmetric matrix $\lk_f$ is
$$\left(\{123,456\}+\{126,453\}\right)+\left(\{162,345\}+\{165,342\}\right)+\left(\{561,234\}+\{564,231\}\right)=$$
$$=2\{123,456\}.$$
Thus making the same modification $z$ times we obtain the required PL embedding.
\end{proof}

\begin{remark}\label{r:sgs}
(a) Theorem \ref{t:cgs} was proved in the following stronger form:

{\it For any piecewise linear (PL) embedding  $K_6\to \R^3$ the number of linked modulo 2 unordered pairs of
images of two disjoint cycles in $K_6$ is odd.}


(b) Part (a) is implied by the following assertion \jonly{(see proof in \cite[Remark 2.2.b]{KS20})} and the known fact stated at the beginning of the proof of Proposition \ref{p:rams} (this is essentially the standard argument).

{\it For any two PL embeddings $f,g:K_6\to\R^3$ the symmetric matrix $\lk_f$ can be obtained from
the symmetric matrix $\lk_g$ by several transformations described in Lemma \ref{l:kar}.}

\aronly{{\it Proof.} We may assume that $g=f$ outside the interior of an edge $a$.
Then analogously to the calculations in the proof of Lemma \ref{l:kar} we see that $\lk_f$ is obtained from
$\lk_g$ by the transformations described in Lemma \ref{l:kar} for all the 6 edges of $K_6$ disjoint from $a$. \qed}

(c) Proposition \ref{p:rams} shows that there are no linear relations or congruences on numbers $\lk_f(\sigma,\tau)$ except (a).
\aronly{
The following is a combinatorial illustration of this fact.

{\it There is no map $\xi:X\to\{+1,-1\}$ from the set $X$ of unordered pairs of disjoint oriented cycles $\sigma,\tau$ of length 3 in $[6]$ such that
$\xi(ijk,pqr)=-\xi(ijr,pqk)$ whenever $[6]=\{i,j,k,p,q,r\}$.}

This follows because otherwise
$$\xi(123,456)=\xi(231,564)=-\xi(234,561)=-\xi(342,615)=$$
$$=\xi(345,612)=\xi(126,453)=-\xi(123,456).$$}
\end{remark}

\begin{proof}[Proof of Theorem \ref{t:ss1ae}]\footnote{We are grateful to F. Frick for allowing us to present this proof based on an idea he suggested. Alternative (earlier) proofs are presented in \S\ref{s:alt}\aronly{-\S\ref{s:delpro}.} \jonly{and in \cite[\S4, \S5]{KS20}.}}
If a closed polygonal line $L\subset\R^3$ is unknotted then the fundamental group of the complement is $\Z$.
Hence a closed polygonal line in $\R^3-L$ is null-homotopic if and only if it is null-homologous, i.e., if and only if it has zero linking number with $L$.

So if two closed polygonal lines in $\R^3$ have zero linking number and the second of them is unknotted then the first of them spans a mapped 2-disk disjoint from the second one.
Hence the inductive base $k=\ell=1$ follows by Proposition \ref{p:rams}.

Let us prove the inductive step.
If $k>1$, then either $k>\ell$ or $\ell>1$.

If $k>\ell$, observe that
$$F_{k,\ell}=F_{k-1,\ell}\cup\con\left(F_{k-1,\ell}\cap F_{k,\ell-1}\right)\cup {k+\ell+2\choose k+1},$$
where the vertex of the cone is $k+\ell+3$.
The same formula is correct with $F_{k,\ell},F_{k-1,\ell}$ replaced by $F_{k,\ell,-},F_{k-1,\ell,-}$.
Since $k>\ell$, by the inductive hypothesis there is a PL almost embedding $f:F_{k-1,\ell,-}\to\R^{k+\ell}$ such that $|\lk f|=2z+1$.
Extend it to a map $f':F_{k,\ell,-}\to\R^{k+\ell+1}$ as follows.
Extend $f$ conically over the cone, with the vertex in the upper half-space of $\R^{k+\ell+1}$ w.r.t. $\R^{k+\ell}$.
Map the $k$-faces of ${k+\ell+2\choose k+1}$ to the lower half-space of $\R^{k+\ell+1}$ w.r.t. $\R^{k+\ell}$.
Since $k>\ell$, we have $2(k+1)>k+\ell+2$, so any two such $k$-faces intersect.
Thus the extension $f'$ is a PL almost embedding.
Since $f'\Sigma^k$ is the `suspension' over $f\Sigma^{k-1}$ and $f'=f$ on $\Sigma^\ell$,
we have $|\lk f'|=|\lk f|=2z+1$.

If $\ell>1$, observe that
$$F_{k,\ell}=F_{k,\ell-1}\cup\con\left(F_{k,\ell-1}\cap F_{k-1,\ell}\right)\cup 0*{k+\ell+2\choose \ell+1},$$
where the vertex of the cone is $k+\ell+3$.
The complex $F_{k,\ell,-}$ is obtained from the above union by deleting the $(\ell+1)$-simplex $0*[\ell+1]$ and adding the $\ell$-simplex $0*[\ell]$.
Since $\ell>1$, by the inductive hypothesis there is a PL almost embedding $f:F_{k,\ell-1,-}\to\R^{k+\ell}$ such that $|\lk f|=2z+1$.
Extend it to a map $f':F_{k,\ell,-}\to\R^{k+\ell+1}$ as follows.
Extend $f$ conically over the cone, with the vertex in the upper half-space of $\R^{k+\ell+1}$ w.r.t. $\R^{k+\ell}$.
Map the $(\ell+1)$-faces of $0*{k+\ell+2\choose \ell+1}$ (except $0*[\ell+1]$) and the $\ell$-face to the lower half-space of $\R^{k+\ell+1}$ w.r.t. $\R^{k+\ell}$.
Any two such $(\ell+1)$- or $\ell$-faces intersect at 0.
Thus the extension $f'$ is a PL almost embedding.
Since $f'\Sigma^\ell$ is the `suspension' over $f\Sigma^{\ell-1}$ and $f'=f$ on $\Sigma^k$, we have
$|\lk f'|=|\lk f|=2z+1$.
\end{proof}

\section{
Alternative proof of Theorem \ref{t:ss1ae}}\label{s:alt}

Let $\Delta^k\subset\Sigma^k$ be the $k$-simplex with the vertex set $\{\ell+3,\ell+4,\ldots,k+\ell+3\}$.
(So that $\Delta^k\ne \Delta^{\ell+1}$ even when $k=\ell+1$.)

\begin{lemma}\label{l:ss} For any integers $0\le \ell\le k$ there is

(a) a PL map $g:F\to\R^{k+\ell+1}$ whose self-intersection set consists of two points, one in $\Int\Delta^{\ell+1}$ and the other in $\Int\Delta^k$, so that the images of these interiors intersect transversally.  \cite[Lemma~1.1]{SS92}\footnote{The condition $\ell<k$ is present in \cite[Lemma~1.1]{SS92} but is not used in the proof.}

(b) a PL embedding $f:F_-\to\R^{k+\ell+1}$  such that $|\lk f|=1$.
\end{lemma}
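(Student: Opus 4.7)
My plan is to derive part (b) from part (a) by restriction, and then to compute $\lk f$ via intersection theory with $g\Delta^{\ell+1}$ as a bounding disk for $f\Sigma^\ell$. Part (a) is cited from \cite[Lemma~1.1]{SS92}; to reproduce it I would place the vertex sets of $\Delta^{\ell+1}$ and $\Delta^k$ in $\R^{k+\ell+1}$ so that their barycenters coincide and their affine spans intersect only at this common barycenter (possible since $(\ell+1)+k=k+\ell+1$), place the remaining vertex $\ell+2$ generically, and extend $g$ affinely on each simplex of $F$; the sole self-intersection then occurs at the common barycenter.

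For (b), the first step is to set $f := g|_{F_-}$. Because the entire self-intersection locus of $g$ consists of the pair of points in $\Int\Delta^{\ell+1}$ and $\Int\Delta^k$ that map to a single image point, deleting $\Int\Delta^{\ell+1}$ from the source kills exactly one of the two preimages of that image, so $f$ becomes injective and is therefore a PL embedding. The subspheres $\Sigma^\ell = \partial\Delta^{\ell+1}$ and $\Sigma^k$ both survive in $F_-$, so $\lk f$ is well defined. The key observation for the next step is that $\Delta^k$ is precisely the top-dimensional face of the $(k+1)$-simplex bounded by $\Sigma^k$ that is opposite the vertex $\ell+2$.

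Next I would compute $\lk f$ using the singular $(\ell+1)$-disk $g\Delta^{\ell+1} \subset \R^{k+\ell+1}$, whose boundary is $g\Sigma^\ell = f\Sigma^\ell$: by definition $\lk f = g\Delta^{\ell+1} \cdot f\Sigma^k$. Decomposing $\Sigma^k$ into its $k+2$ top-dimensional faces (namely $\Delta^k$ together with $k+1$ others each containing the vertex $\ell+2$), each of the further faces is disjoint, as a simplex of $F$, from $\Delta^{\ell+1}$, since $\{\ell+2,\ldots,k+\ell+3\}\cap\{0,\ldots,\ell+1\}=\emptyset$. By the self-intersection property of $g$ from (a), their $g$-images are therefore disjoint from $g\Delta^{\ell+1}$, so $g\Delta^{\ell+1}\cdot f\Sigma^k = g\Delta^{\ell+1}\cdot g\Delta^k$, which is $\pm 1$ by the transversality clause of (a). The only mild subtlety, and the main obstacle, is the bookkeeping: one must rule out stray intersection contributions from the lower-dimensional faces of $\Sigma^k$ (automatic, since $g$ has no other self-intersections at all) and confirm that the single transverse intersection of $g\Delta^{\ell+1}$ with $g\Delta^k$ has multiplicity $\pm 1$ rather than $0$ (immediate from the transversality in (a)).
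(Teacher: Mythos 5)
Your deduction of (b) from (a) is correct and is exactly the argument the paper leaves implicit (the paper only writes ``Part (b) follows from (a)''): restricting $g$ to $F_-$ removes one of the two points of the self-intersection set, so $f=g|_{F_-}$ is injective and hence a PL embedding, and $\lk f=\lk(f\Sigma^k,f\Sigma^\ell)$ is computed as the intersection number of $f\Sigma^k$ with the singular disk $g\Delta^{\ell+1}$ bounding $f\Sigma^\ell$, which is $\pm1$ because the only double point of $g$ is the single transverse intersection of $g(\Int\Delta^{\ell+1})$ with $g(\Int\Delta^k)$.

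One caution about your parenthetical reconstruction of (a): it is not adequate as stated. An affine-on-simplices map with the barycenters of $\Delta^{\ell+1}$ and $\Delta^k$ identified and the remaining vertex generic will in general have many further double points, since $F$ contains many other pairs of disjoint simplices whose dimensions sum to $k+\ell+1$ (for instance, pairs of simplices of $[k+\ell+3]$ of dimensions between $\ell+1$ and $k$ partitioning $[k+\ell+3]$, and pairs formed by a simplex of $[k+\ell+3]$ and an $(\ell+1)$-simplex containing $0$), and in general position each such pair contributes isolated intersection points; arranging that \emph{all} of these vanish is precisely the nontrivial content of \cite[Lemma~1.1]{SS92}. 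Since both you and the paper take (a) as a citation, this does not affect the correctness of your proof of (b), but the sketch should not be presented as a proof of (a).
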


Part (b) follows from (a).

The {\it simplicial deleted product} of a complex $K$ is
$$K^{\times2}_\Delta:=\cup\{\sigma\times\tau\ :\ \sigma,\tau\in K,\sigma\cap\tau=\emptyset\}.$$
For a complex $K$, a map $g:K\to\R^d$ and an equivariant subset $G\subset K^2$ such that $g(x)\ne g(y)$ for each $(x,y)\in G$ define an equivariant map
$$g^{\times2}_\Delta:G\to S^{d-1}\quad\text{by}\quad g^{\times2}_\Delta(x,y) := \frac{g(x)-g(y)}{|g(x)-g(y)|}.$$
If $g$ is an almost embedding, we assume that $G=K^{\times2}_\Delta$.

\begin{proposition}\label{p:delp} Let $d$ be an integer and $K$ a $k$-complex such that either

(a) $2d\ge 3k+3$; or

(b) $d\ge k+2$ and $2d-k-3\ge\dim\alpha+\dim\beta$ for any disjoint simplices $\alpha,\beta\subset K$.

For any equivariant map $\Phi:K^{\times2}_\Delta\to S^{d-1}$ there is a PL almost embedding $f:K\to\R^d$ such that $f^{\times2}_\Delta$ is equivariantly homotopic to $\Phi$.
\end{proposition}


Part (a) is a celebrated result of Weber \cite{We67}, see survey \cite[\S5]{Sk06}.
Part (b) works for $2d<3k+3$ and is an easy corollary of the generalization \cite[Disjunction Theorem 3.1]{Sk02}.
Part (b) in some sense generalizes Theorem \ref{t:ss1ae}, see \aronly{Remark \ref{lemma:construction}.a.} \jonly{\cite[Remark 3.6.a]{KS20}.}

\begin{proof}[Proof of Proposition \ref{p:delp}.b]
Apply \cite[Disjunction Theorem 3.1]{Sk02} to $N=|K|$, $T=K$, $A=\emptyset$, $E_1=K^{\times2}_\Delta$,
$E_0=\emptyset$, $h_0$ any PL map and the given map $\Phi$.
Let $f$ be the obtained map $h_1$.
Then by \cite[(3.1.1)]{Sk02} $f$ is an almost embedding.
By \cite[(3.1.2)]{Sk02} $f^{\times2}_\Delta$ is equivariantly homotopic to $\Phi$.
\end{proof}


\begin{lemma}\label{l:delpro} For any integers $0<\ell<k$ and $z$ there is an equivariant map
$\Phi:(F_-)^{\times2}_\Delta\to S^{k+\ell}$ such that $\deg\Phi|_{\Sigma^k\times\Sigma^\ell}=2z+1$.
\end{lemma}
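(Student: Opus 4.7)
The plan is to begin with the equivariant map associated to the embedding from Lemma \ref{l:ss}.b and then adjust its degree on $\Sigma^k\times\Sigma^\ell$ by post-composing with a suitable equivariant self-map of $S^{k+\ell}$.

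First I would apply Lemma \ref{l:ss}.b to obtain a PL embedding $f_0:F_-\to\R^{k+\ell+1}$ with $\lk f_0=\epsilon\in\{+1,-1\}$, and form the associated Gauss map
$$\Phi_0:=(f_0)^{\times2}_\Delta:(F_-)^{\times2}_\Delta\to S^{k+\ell},\qquad\Phi_0(x,y)=\frac{f_0(x)-f_0(y)}{|f_0(x)-f_0(y)|}.$$
This is well-defined on all of $(F_-)^{\times2}_\Delta$ because disjointness of simplices combined with injectivity of $f_0$ forces $f_0(x)\ne f_0(y)$, and it is $\Z_2$-equivariant: the swap on the deleted product corresponds to the antipodal involution on $S^{k+\ell}$. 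The restriction $\Phi_0|_{\Sigma^k\times\Sigma^\ell}$ is exactly the Gauss map computing the linking number $\lk(f_0\Sigma^k,f_0\Sigma^\ell)=\epsilon$, so $\deg\Phi_0|_{\Sigma^k\times\Sigma^\ell}=\epsilon$.

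Next I would choose a $\Z_2$-equivariant self-map $d:S^{k+\ell}\to S^{k+\ell}$ (with respect to the antipodal involutions on both) of the odd degree $m:=\epsilon(2z+1)$, and set $\Phi:=d\circ\Phi_0$. Being a composition of two equivariant maps, $\Phi$ is equivariant, and multiplicativity of degrees gives
$$\deg\Phi|_{\Sigma^k\times\Sigma^\ell}=\deg(d)\cdot\deg\Phi_0|_{\Sigma^k\times\Sigma^\ell}=\epsilon(2z+1)\cdot\epsilon=2z+1,$$
as required.

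The only input that is not immediate is the existence of such a $d$, i.e.\ the realization of every odd integer as the degree of an equivariant self-map of $S^{k+\ell}$. This is classical: the map $e^{i\theta}\mapsto e^{im\theta}$ is an equivariant self-map of $S^1$ of degree $m$ for every odd $m$, and the iterated unreduced suspension $(x,t)\mapsto(f(x),t)$ preserves both the degree and equivariance with respect to the antipodal involution $(x,t)\mapsto(-x,-t)$, yielding the same conclusion on every higher-dimensional sphere. This is the only obstacle I foresee in executing the plan, and it is disposed of by the above explicit construction.
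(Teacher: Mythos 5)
Your proof is correct, but it takes a genuinely different route from the paper's. You start from the embedding $f_0$ of Lemma \ref{l:ss}.b, whose Gauss map $(f_0)^{\times2}_\Delta$ is already defined on all of $(F_-)^{\times2}_\Delta$, and you adjust the degree by post-composing with an equivariant self-map of $S^{k+\ell}$ of odd degree; the two ingredients you rely on --- existence of such self-maps (iterated suspensions of $e^{i\theta}\mapsto e^{im\theta}$, $m$ odd) and multiplicativity of degree for maps from the closed oriented manifold $\Sigma^k\times\Sigma^\ell$ --- are both standard and correctly used, and equivariance of the composition is immediate, so no extension problem ever arises. The paper instead starts from the non-embedding $g$ of Lemma \ref{l:ss}.a, works on the $(k+\ell)$-skeleton of the full deleted product $F^{\times2}_\Delta$, and exploits its $\Z_2$-pseudomanifold structure (Lemmas \ref{l:deljoi} and \ref{l:pseudomanifold}, via the Bier sphere) to push a degree correction $-z$ along a chain of $(k+\ell)$-cells joining $\Delta^k\times\Delta^{\ell+1}$ to $\Delta^{\ell+1}\times\Delta^k$, verifying that the extension obstructions over all $(k+\ell+1)$-cells of $(F_-)^{\times2}_\Delta$ remain zero. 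Your argument is much shorter and controls exactly the one degree demanded by the statement; the paper's cell-by-cell modification buys finer control, namely the ability to prescribe the boundary degrees $\deg\Phi|_{\partial U}$ on all $(k+\ell+1)$-cells (Remark \ref{lemma:construction}), which post-composition cannot achieve because it multiplies all local degrees at once. One minor point: the identification $\deg (f_0)^{\times2}_\Delta|_{\Sigma^k\times\Sigma^\ell}=\lk f_0$ holds only up to a sign depending on orientation conventions; this is harmless --- take $\epsilon$ to be the degree itself, or absorb the sign into the choice of $z$ --- and the paper makes the same kind of adjustment at the end of its proof.
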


\begin{proof}[Proof of Theorem \ref{t:ss1ae} for $1<\ell<k$ modulo Lemma \ref{l:delpro}]
Theorem \ref{t:ss1ae} for $1<\ell<k$ follows from Lemma \ref{l:delpro} and Proposition \ref{p:delp}.b because
$f^{\times2}_\Delta$ is homotopic to $\Phi$ on $\Sigma^k\times\Sigma^\ell$, so $|\lk f|=2z+1$ \cite{Sk16h}.
\end{proof}

The {\it simplicial deleted join} of a complex $K$ is
$$K^{*2}_\Delta:=\cup\{\sigma*\tau\ :\ \sigma,\tau\in K,\sigma\cap\tau=\emptyset\}.$$

\begin{lemma}\label{l:deljoi} For any integers $0\le\ell<k$  we have $F^{*2}_\Delta\cong_{\Z_2} S^{k+\ell+2}$.
\end{lemma}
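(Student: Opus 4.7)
The plan is to identify $F^{*2}_\Delta$ with a Bier sphere, using that $F=F_{k,\ell}$ is Alexander self-dual on its vertex set. Recall that for a simplicial complex $K$ on $[n]$ with $\emptyset\in K\ne 2^{[n]}$, the \emph{Alexander dual} is $K^\vee:=\{\sigma\subseteq[n]\ :\ [n]\setminus\sigma\notin K\}$, and Bier's theorem (T.~Bier, 1992; see also Matousek, \emph{Using the Borsuk--Ulam theorem}, Ch.~5) asserts that the \emph{Bier sphere} $\mathrm{Bier}(K):=K*_\Delta K^\vee$ is a shellable PL $(n-2)$-sphere. When $K=K^\vee$, this Bier sphere is precisely the deleted join $K^{*2}_\Delta$, and its free swap $\Z_2$-action agrees with the antipodal action on the ambient cross-polytope.

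First I would verify the self-duality $F^\vee=F$ on the vertex set $V(F)=[k+\ell+3]\cup\{0\}$, which has size $n=k+\ell+4$. Unpacking the definition, $\sigma\in F$ iff either $0\notin\sigma$ and $|\sigma|\le k+1$, or $0\in\sigma$ and $|\sigma|\le\ell+2$. Since $|V(F)\setminus\sigma|=n-|\sigma|$ and exactly one of $\sigma,V(F)\setminus\sigma$ contains $0$, the condition ``$V(F)\setminus\sigma\in F$'' unfolds to ``either $0\notin\sigma$ and $|\sigma|\ge k+2$, or $0\in\sigma$ and $|\sigma|\ge\ell+3$''. Negating these two inequalities recovers precisely the defining conditions of $F$, so $F^\vee=F$.

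Given the self-duality, the conclusion is immediate: $F^{*2}_\Delta=F*_\Delta F^\vee=\mathrm{Bier}(F)\cong_{\Z_2}S^{n-2}=S^{k+\ell+2}$ by Bier's theorem, with equivariance automatic because the swap involution on the deleted join coincides with the free antipodal action on $\mathrm{Bier}(F)$ under the identification $F=F^\vee$. Nondegeneracy of the hypothesis $\emptyset\in F\ne 2^{V(F)}$ is clear, since $\dim F=k<\dim\Delta_{V(F)}=k+\ell+3$.

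The only substantive input is Bier's theorem itself, whose standard proof proceeds via an explicit shelling of $\mathrm{Bier}(K)$; the self-duality check is a brief combinatorial calculation, so I do not foresee a genuine obstacle. (One could alternatively bypass Bier's theorem and directly describe $F^{*2}_\Delta$ as a subcomplex of the cross-polytope $\partial\lozenge^{n-1}$ cut out by the inequalities on $(|\{x_i>0\}|,|\{x_i<0\}|)$ reflecting membership in $F$, and shell it by hand, but this merely re-enacts Bier's original argument.)
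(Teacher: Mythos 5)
Your proposal is correct and follows essentially the same route as the paper: you verify that $F$ is Alexander self-dual (the same case analysis on whether $0\in\sigma$) and then invoke Bier's theorem in Matou\v sek's formulation to conclude $F^{*2}_\Delta\cong_{\Z_2}S^{k+\ell+2}$. The extra remarks on nondegeneracy and on the swap action agreeing with the antipodal action are fine and only make explicit what the paper's citation of \cite[Definition 5.6.1 and Theorem 5.6.2]{Ma03} leaves implicit.
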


\begin{proof} \emph{A subset $\sigma\subset\{0,1,\ldots,k+\ell+3\}$ is a face of $F$ if and only if the complement $\overline\sigma$ is not a face of $F$.}
Indeed,

$\bullet$ if $0\in\sigma$, then both claims are equivalent to $|\sigma|\le \ell+2$;

$\bullet$ if $0\not\in\sigma$, then both claims are equivalent to $|\sigma|\le k+1$.

This property ($F$ is {\it Alexander dual to itself}) implies that $F^{*2}_\Delta\cong_{\Z_2} S^{k+\ell+2}$
by a result of Bier \cite[Definition 5.6.1 and Theorem 5.6.2]{Ma03}.
\end{proof}

\begin{lemma}\label{l:pseudomanifold}
(a) Any two $(k+\ell+1)$-cells of $F^{\times2}_\Delta$ can be joined by a sequence of $(k+\ell+1)$-cells of $F^{\times2}_\Delta$ in which any two consecutive $(k+\ell+1)$-cells have a common $(k+\ell)$-cell.

(b) Any $(k+\ell)$-cell of $F^{\times2}_\Delta$ belongs to precisely two $(k+\ell+1)$-cells of $F^{\times2}_\Delta$.

(c) There is a collection of orientations on $(k+\ell+1)$-cells of $F^{\times2}_\Delta$ such that for any $(k+\ell)$-cell of $F^{\times2}_\Delta$ the orientations on the two adjacent $(k+\ell+1)$-cells of $F^{\times2}_\Delta$ induce the opposite orientations on the $(k+\ell)$-cell.

(d) For the orientations on $(k+\ell+1)$-cells of $F^{\times2}_\Delta$ given by (b) the exchange $\pi(x,y):=(y,x)$ of the factors acts on the orientations as multiplication by $(-1)^{k+\ell}$.
\end{lemma}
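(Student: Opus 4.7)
The starting observation for all four parts is that any codim-$1$ cell $\sigma\times\tau$ of $F^{\times2}_\Delta$ satisfies $|\sigma|+|\tau|=k+\ell+2$, so exactly $|V(F)|-(k+\ell+2)=2$ vertices of $V(F)=\{0,1,\dots,k+\ell+3\}$ lie outside $\sigma\cup\tau$. Any top cell containing $\sigma\times\tau$ is obtained by adding one of these two free vertices to $\sigma$ or to $\tau$, subject to the dimensional constraints of $F$ (dimension $\le k$ if the simplex omits $0$, dimension $\le\ell+1$ if it contains $0$) and to disjointness of the two factors. For (b) I would split on whether $0\in\sigma$, $0\in\tau$, or $0$ is itself one of the free vertices, and verify in each case that precisely two of the (at most four) candidates are admissible, using that adding $0$ to a factor is forbidden exactly when the other factor already contains $0$, and that the dimensional bounds force only one factor to have room in any given configuration.

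Part (a) then follows from (b): crossing each codim-$1$ face of a top cell $\sigma\times\tau$ yields a unique adjacent top cell differing from $\sigma\times\tau$ by a single vertex replacement. Composing such elementary moves realises enough of the symmetric-group action on $V(F)$ that any top cell can be brought to a fixed canonical one, so the codim-$1$-adjacency graph on top cells is connected.

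For (c) and (d) I would fix the natural order on $V(F)$, orient each simplex of $F$ by its increasing vertex list, and orient each top cell $\sigma\times\tau$ with $\dim\sigma=p$, $\dim\tau=q$ by $\varepsilon(\sigma,\tau)\cdot[\sigma]\times[\tau]$, where $\varepsilon(\sigma,\tau)\in\{\pm1\}$ is the sign of the shuffle permutation merging the ordered list ``vertices of $\sigma$ followed by vertices of $\tau$'' into the global increasing order on $\sigma\cup\tau$. Claim (c) reduces to checking that, at each codim-$1$ cell $\alpha\times\beta$ from the case list of (b), the two adjacent top cells induce opposite boundary signs; this is a routine application of $\partial[\sigma]=\sum_i(-1)^i[\sigma\setminus v_i]$ and the Leibniz rule $\partial([\sigma]\times[\tau])=\partial[\sigma]\times[\tau]+(-1)^p[\sigma]\times\partial[\tau]$, requiring one shuffle-sign comparison per case. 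For (d), the factor swap multiplies the product orientation by $(-1)^{pq}$, while $\varepsilon(\sigma,\tau)/\varepsilon(\tau,\sigma)=(-1)^{(p+1)(q+1)}$ (the sign of exchanging two blocks of lengths $p+1$ and $q+1$); using $p+q=k+\ell+1$, the combined factor is $(-1)^{pq+(p+1)(q+1)}=(-1)^{p+q+1}=(-1)^{k+\ell}$, uniform across all top cells.

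The hard part is the simultaneous verification of (c) and (d) with a single choice of $\varepsilon$: the boundary cancellation in (c) has to be checked in every codim-$1$-cell type produced by the enumeration of (b), including the subtler cases where the exchanged vertex is $0$ or where $0$ switches factors, but the shuffle-sign convention resolves all of them, and then (d) follows automatically from the block-swap formula above.
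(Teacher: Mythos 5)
Your plan — enumerate the at most four cofaces of each $(k+\ell)$-cell for (b), get (a) by elementary moves, and prove (c),(d) by an explicit sign rule — is the route of the paper's direct appendix proof rather than the paper's primary argument (which deduces (a)--(d) from the $\Z_2$-homeomorphism $F^{*2}_\Delta\cong S^{k+\ell+2}$ given by Bier's theorem). But your sign rule does not satisfy (c). Concretely, take $k=1,\ell=0$, so $F=F_{1,0}=K_5$ on $\{0,1,2,3,4\}$, and the $(k+\ell)$-cell $\{2\}\times\{3,4\}$. Its two cofaces are $A=\{0,2\}\times\{3,4\}$ and $B=\{1,2\}\times\{3,4\}$; the juxtaposed lists $(0,2,3,4)$ and $(1,2,3,4)$ are already increasing, so your $\varepsilon$ is $+1$ for both, and both $\partial\bigl([0,2]\times[3,4]\bigr)$ and $\partial\bigl([1,2]\times[3,4]\bigr)$ contain $[2]\times[3,4]$ with coefficient $+1$: the induced orientations agree instead of being opposite. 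The mechanism is general: when the two cofaces of $\sigma\times\tau$ arise by adjoining the two free vertices $v<w$ to the \emph{same} factor (this happens whenever $\{\dim\sigma,\dim\tau\}=\{\ell,k\}$, and for cells containing $0$), the shuffle-sign convention gives cancellation iff the number of vertices strictly between $v$ and $w$ is odd, which fails in general; only the different-factor adjacencies are handled correctly by the juxtaposition sign. Note every failing configuration involves a coface containing $0$, which is exactly where your rule deviates from the one that works. Consequently (c) is false for your orientations, and your (d) computation, though arithmetically correct, refers to a non-coherent collection.

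To repair (c) the sign must depend on the \emph{free} vertex, not only on the increasing order of $\sigma\cup\tau$: the paper's appendix keeps $\sgn(\alpha,\beta)$ on the cells $\alpha\times\beta$ with $\alpha\sqcup\beta=[k+\ell+3]$ (where your rule agrees) but orients $(0,\alpha)\times\beta$ by $-\sgn(v,\alpha,\beta)$ and $\alpha\times(0,\beta)$ by $-\sgn(\alpha,v,\beta)$, $v$ being the free vertex, and then verifies cancellation case by case; alternatively one avoids all sign bookkeeping by transporting (a)--(d) from the Bier sphere via $\sigma\times\tau\mapsto\sigma*\tau$, as in the paper's main proof. Two smaller points: (a) does not ``follow from (b)'' — a complex in which every $(k+\ell)$-cell has exactly two cofaces can still have a disconnected adjacency graph — so your ``enough of the symmetric-group action'' sentence stands in for an argument that must actually be made (the paper joins cells inside ${[k+\ell+3]\choose\le k+1}^{\times2}_\Delta$ by enlarging one factor and shrinking the other, and then attaches the cells containing $0$); and once (c) is fixed, (d) must be recomputed for the corrected orientations (with a coherent orientation it suffices, as the paper notes, to check the swap sign on a single top cell).
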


\begin{proof} This follows by Lemma \ref{l:deljoi} because the formula $\sigma\times\tau\mapsto\sigma*\tau$
defines a 1--1 correspondence between $(p+1)$-cells of $F^{*2}_\Delta$ and $p$-cells of $F^{\times2}_\Delta$, $p>0$, which respects adjacency and orientation.
For part (d) we also need that the antipodal involution of $S^{k+\ell+2}$ multiplies the orientation by $(-1)^{k+\ell+1}$.
\end{proof}

\begin{proof}[Proof of Lemma \ref{l:delpro}]
Denote by $F^{\times2,(k+\ell)}_\Delta$ the $(k+\ell)$-skeleton  of $F^{\times2}_\Delta$.
Take a map $g$ given by Lemma \ref{l:ss}.a.
Then $g^{\times2}_\Delta:F^{\times2,(k+\ell)}_\Delta\to S^{k+\ell}$ is defined.
We have $\deg g^{\times2}_\Delta|_{\Sigma^k\times\Sigma^\ell}=\lk g|_{F_-}=\pm1$.

Informally, the lemma now follows because $(F_-)^{\times2}_\Delta$ is obtained from the connected pseudomanifold $F^{\times2}_\Delta$ by deleting two codimension 0 submanifolds $\Delta^{\ell+1}\times\Sigma^k$ and $\Sigma^k\times\Delta^{\ell+1}$, which go one to the other under the exchange of factors.

Formally, we shall modify the map $g^{\times2}_\Delta$ as follows.

For an integer $a$, an equivariant map $\Psi:F^{\times2,(k+\ell)}_\Delta\to S^{k+\ell}$ and oriented $(k+\ell)$-cell $V$ of $F$ denote by $\Psi_{V,a}:F^{\times2,(k+\ell)}_\Delta\to S^{k+\ell}$ any equivariant map obtained by the following construction (in fact, this construction produces a map $\Psi_{V,a}$ well-defined up to homotopy).
Define $\Psi_{V,a}|_V$ to be the connected sum of $\Psi|_V$ and a map $S^{k+\ell}\to S^{k+\ell}$ of degree $a$.
(In other words, define $\Psi_{V,a}|_V$ to be the composition
$V\overset c\to V\vee S^{k+\ell}\overset{\Psi\vee\widehat a}\to S^{k+\ell}$, where $c$ is the contraction of certain $(k+\ell-1)$-sphere in the interior of $V$ and $\widehat a$ is a map of degree $a$.)
Define $\Psi_{V,a}(x,y):=-\Psi_{V,a}|_V(y,x)$ for $(y,x)\in V$.
Define $\Psi_{V,a}=\Psi$ elsewhere.
We write that $\Psi_{V,a}$ is obtained from $\Psi$ by {\it the modification  $(V,a)$.}

For oriented manifolds $A$ and $B$ of the same dimension denote $[A:B]=+1$ if $B\subset A$ and their orientations coincide, $[A:B]=-1$ if $B\subset A$ and their orientations are the opposite, and $[A:B]=0$ otherwise (i.e., if $B\not\subset A$).

Clearly, $\deg\Psi_{V,a}|_{\partial A}=\deg\Psi|_{\partial A}$ for any $(k+\ell+1)$-cell $A$ disjoint from $V\cup\pi V$.
For a $(k+\ell+1)$-cell $U\supset V$ we have
$$\deg\Psi_{V,a}|_{\partial U} - \deg\Psi|_{\partial U} = [\partial U:V] a.$$
By Lemma \ref{l:pseudomanifold}.d and since the antipodal involution of $S^{k+\ell}$ multiplies the orientation by $(-1)^{k+\ell+1}$, we have
$$\deg\Psi_{V,a}|_{\pi\partial U} - \deg\Psi|_{\pi\partial U} =- [\partial U:V] a.$$
By Lemma \ref{l:pseudomanifold}.a there is a sequence
$\Delta^k\times\Delta^{\ell+1} = U_0,U_1,\ldots,U_m = \Delta^{\ell+1}\times\Delta^k$ of $(k+\ell+1)$-cells of $F^{\times2}_\Delta$ such that $V_i:=U_{i-1}\cap U_i$ is a $(k+\ell)$-cell for each $i=1,2,\ldots,m$.
Take the above orientations on the $U_i$ and orient the $V_i$ so that $[\partial U_i:V_i]=1$ for each $i=1,2,\ldots,m$.
Denote by $\Phi:F^{\times2,(k+\ell)}_\Delta\to S^{k+\ell}$ any equivariant map obtained from $g^{\times2}_\Delta$
by the modifications $(V_1,-z),\ldots,(V_m,-z)$.
Clearly, $\deg\Phi|_{\partial A}=\deg g^{\times2}_\Delta|_{\partial A}$ for any $(k+\ell+1)$-cell $A\not\in\{U_0,U_m\}$.
Then $\deg\Phi|_{\partial U_0} - \deg g^{\times2}_\Delta|_{\partial U_0}  = 2z$.
We have $\deg g^{\times2}_\Delta|_{\Sigma^k\times\Sigma^\ell}=\pm\lk g|_{F_-}$.
If this degree is $+1$, then we are done.
If this degree is $-1$, then we make additionally the same construction replacing $-z$ by $-1$.
\end{proof}

\aronly{

\begin{remark}\label{lemma:construction}
(a) Let us give a direct proof of Theorem \ref{t:ss1ae} for $1<\ell<k$ without reference to Proposition \ref{p:delp}.b.
Apply \cite[Disjunction Theorem 3.1]{Sk02}\footnote{The required particular case of this result can easily be recovered by `turning some simplices around another simplices' and Whitney trick, see the survey \cite[beginning of \S8]{Sk06}.} to $N=|F_-|$, $T=F_-$, $A=\emptyset$, $E_1=(F_-)^{\times2}_\Delta$,
$E_0=\emptyset$, $h_0=g$ and the map $\Phi$ given by Lemma \ref{l:delpro}.
Observe that for disjoint simplices $\alpha,\beta\subset F_-$ we have $\dim\alpha+\dim\beta\le k+\ell+1$.
Since $l\ge2$, we have
$$k+\ell+1+\dim F_-=2k+\ell+1\le2(k+\ell+1)-3\quad\text{and}\quad\dim F_-=k\le(k+\ell+1)-2.$$
Thus the assumptions of \cite[Disjunction Theorem 3.1]{Sk02} are fulfilled.
Let $f$ be the obtained map $h_1$.
Then by \cite[(3.1.1)]{Sk02} $f$ is an almost embedding.
By \cite[(3.1.2)]{Sk02} $f^{\times2}_\Delta$ is homotopic to $\Phi$ on $\Sigma^k\times\Sigma^\ell$.
So $|\lk f|=2z+1$ \cite{Sk16h}.

(b) Analogously to Lemma \ref{l:delpro} one proves the following.
Assume that $c$ is an assignment of integers to $(\ell+k+1)$-cells $U\subset F^{\times2}_\Delta$ oriented as above
(i.e., for any $c\in Z^{\ell+k+1}(F^{\times2}_\Delta;\Z)$) such that
$\sum_U |c(U)|\equiv 2 \mod 4$ and $c(\pi(U)) = -c(U)$ for any $U$.
Then there is an equivariant map $\Phi:F^{\times2,(k+\ell)}_\Delta\to S^{k+\ell}$ such that
$\deg\Phi|_{\partial U}=c(U)$ for any $(\ell+k+1)$-cell $U\subset F^{\times2}_\Delta$.
\end{remark}

\section{Appendix: a direct proof of Lemma \ref{l:pseudomanifold}}\label{s:pseu}

\begin{proof}[Direct proof of Lemma \ref{l:pseudomanifold}.a]
Any $(k+\ell+1)$-cell $\alpha\times\beta \subset {[k+\ell+3]\choose\le k+1}^{\times2}_\Delta$ has a common $(k+\ell)$-cell with another $(k+\ell+1)$-cell $\alpha'\times\beta'$ whenever $\alpha'\supset\alpha$, $\beta'\subset\beta$ and $|\alpha'-\alpha| = |\beta-\beta'|=1$.
In several such steps we join any two $(k+\ell+1)$-cells in ${[k+\ell+3]\choose \le k+1}^{\times2}_\Delta$.

For any $(k+\ell+1)$-cell $(0,\alpha)\times\beta$ we have $\dim\alpha=\ell$ and $\dim\beta=k$.
This cell has a common $(k+\ell)$-cell with
$(v,\alpha)\times\beta \subset {[k+\ell+3]\choose\le k+1}^{\times2}_\Delta$, where $\{v\}=[k+\ell+3]-\alpha-\beta$.
The case when $0$ is in the second factor is analogous.
Hence any cell involving $0$ is also joined to any other cell.
\end{proof}

{\bf Definition of orientations on $(\ell+k+1)$-cells of $F^{\times2}_\Delta$.}
If $(a_0,\ldots,a_d)$ is a sequence of distinct elements of $\{0,1,\ldots, \ell+k+3\}$ then $(a_0,\ldots,a_d)$ is understood as a simplex of $F$ with the orientation induced by this order.
We assign the orientations to $(\ell+k+1)$-cells of $\widetilde F$ by modifying the product orientations
as follows.
In the following formulas $\sgn$ is the sign of a permutation of $[\ell+k+3]$.
For $\alpha=(a_0,\ldots,a_m)$ and $\beta=(b_0,\ldots, b_n)$ let $(\alpha,\beta):=(a_0,\ldots,a_m,b_0,\ldots,b_n)$.
The notation $(v,\alpha,\beta)$ and $(\alpha,v,\beta)$ has analogous meaning.

(1) for $\alpha\times\beta$, where $\alpha\sqcup\beta=[\ell+k+3]$, we modify by $\sgn(\alpha,\beta)$;

(2) for $(0,\alpha)\times\beta$, where $\dim\alpha=\ell$, we modify by $-\sgn(v,\alpha,\beta)$, where $\{v\}=[\ell+k+3]-\alpha-\beta$.

(2') for $\alpha\times (0,\beta)$, where $\dim\beta=\ell$, we modify by
$-\sgn(\alpha,v,\beta)$, where $\{v\}=[\ell+k+3]-\alpha-\beta$.

The orientations are well defined because permuting the elements of $\alpha$ we obtain the same change in the product orientation and in the sign of the total permutation; the same applies to $\beta$.

\begin{proof}[Direct proof of Lemma \ref{l:pseudomanifold}.bc]
We need to check that any $(\ell+k)$-cell $\sigma\times \tau$ of $F^{\times2}_\Delta$ belongs to precisely two $(\ell+k+1)$-cells $A,B$ of $F^{\times2}_\Delta$, and that
$[\partial A:\sigma\times\tau]+[\partial B:\sigma\times\tau]=0$ for the above orientations on $A,B$.
Denote $[X]:=[X:\sigma\times\tau]$ for brevity.
In the following formulas the products are oriented as the products, so the above orientation is indicated by the sign.
Recall that for $(\ell+k+1)$-cell $\alpha\times\beta$  oriented as the product we have
$$
[\partial(\alpha\times\beta)] =
[\partial\alpha:\sigma][\beta:\tau]+ (-1)^{\dim\alpha}[\alpha:\sigma][\partial\beta:\tau].
$$
\emph{Case $\sigma\times\tau \subset {[\ell+k+3]\choose \le k+1}^{\times2}_\Delta$, where $\ell<\dim\sigma,\dim\tau<k$.}
Let $v:=[k+\ell+3]-\sigma-\tau$.
The cell $\sigma\times\tau$ is in the boundary of only
$$
A:=\sgn(v,\sigma,\tau)(v,\sigma)\times\tau\quad\text{and}\quad B:=\sgn(\sigma,v,\tau)\sigma\times(v,\tau),\quad\text{so}
$$
$$
[\partial A]+[\partial B] = \sgn(v,\sigma,\tau) + \sgn(\sigma,v,\tau) (-1)^{\dim\sigma} = 0.
$$
\emph{Case $\sigma\times\tau \subset {[\ell+k+3]\choose \le k+1}^{\times2}_\Delta$, where $\dim\sigma=\ell$ and $\dim\tau=k$.}
Let $v:=[k+\ell+3]-\sigma-\tau$.
The cell $\sigma\times\tau$ is in the boundary of only
$$
A:=\sgn(v,\sigma,\tau)(v,\sigma)\times\tau\quad\text{and}\quad B:=\sgn(v,\sigma,\tau)(0,\sigma)\times\tau,\quad\text{so}
$$
$$
[\partial A]+[\partial B] =  \sgn(v,\sigma,\tau) - \sgn(v,\sigma,\tau) = 0.
$$
\emph{Case $\sigma\times\tau \subset {[\ell+k+3]\choose\le k+1}^{\times2}_\Delta$, where $\dim\sigma=k$ and $\dim\tau=\ell$.}
Let $v:=[k+\ell+3]-\sigma-\tau$.
The cell $\sigma\times\tau$ is in the boundary of only
$$
A:=\sgn(\sigma,v,\tau)\sigma\times(v,\tau)\quad\text{and}\quad B:=-\sgn(\sigma,v,\tau)\sigma\times(0,\tau),\quad\text{so}
$$
$$
[\partial A]+[\partial B]=\sgn(\sigma,v,\tau) (-1)^k - \sgn(\sigma,v,\tau) (-1)^k = 0.
$$
\emph{Case $\sigma\times\tau = (0,\sigma')\times\tau$, where $\dim\sigma'=\ell-1$ and $\dim\tau=k$.}
Let $\{v, w\}:=[k+\ell+3]-\sigma'-\tau$, $v<w$.
The cell $\sigma\times\tau$ is in the boundary of only
$$
A:=-\sgn(w,v,\sigma',\tau)(0, v,\sigma')\times\tau \quad\text{and}\quad
B:=-\sgn(v, w, \sigma',\tau)(0, w, \sigma')\times\tau,\quad\text{so}
$$
$$
[\partial A]+[\partial B] = \sgn(w,v,\sigma',\tau) + \sgn(v,w,\sigma',\tau) = 0.
$$
\emph{Case $\sigma\times\tau = \sigma\times(0,\tau')$, where $\dim\sigma=k$ and $\dim\tau'=\ell-1$.}
Let $\{v, w\}:=[k+\ell+3]-\sigma-\tau'$, $v<w$.
The cell $\sigma\times\tau$ is in the boundary of only
$$
A:=- \sgn(\sigma, w, v, \tau')\sigma\times(0, v, \tau')\quad\text{and}\quad
B:=- \sgn(\sigma, v, w,\tau')\sigma\times(0, w, \tau'),\quad\text{so}
$$
$$
[\partial A]+[\partial B]= - \sgn(\sigma, w, v, \tau') (-1)^{k+1} - \sgn(\sigma, v, w, \tau') (-1)^{k+1} = 0.
$$
\end{proof}

\begin{proof}[Direct proof of Lemma \ref{l:pseudomanifold}.d]
By Lemma \ref{l:pseudomanifold}.a it suffices to consider only one $(k+\ell+1)$-cell of our choice.
Choose a cell $\sigma\times\tau \subset {[\ell+k+3]\choose k+1}^{\times2}_\Delta$.
It is oriented as a product with the sign $\sgn(\sigma,\tau)$.
The cell $\tau\times\sigma$ is oriented as the product with the sign
$$
\sgn(\tau,\sigma) = (-1)^{(k+1)(\ell+2)}\sgn(\sigma,\tau) = (-1)^{k\ell + \ell}\sgn(\sigma,\tau).
$$
The exchange of the factors acts on the product orientations as multiplication by $(-1)^{\ell k + k}$.
Hence (d) follows.
\end{proof}

\section{Appendix: an explicit proof of Lemma \ref{l:delpro}}\label{s:delpro}

Here we present an explicit construction for the proof of Lemma \ref{l:delpro} for $k$ odd and $\ell$ even.
This is nothing but giving explicit $U_0,U_1,\ldots,U_m$ from the proof in \S\ref{s:pr}.
However, not constructing the orientations makes this proof shorter than the proof via \S\ref{s:pseu}.

\begin{proof}[Proof: construction of $\Phi'$ for $k$ odd and $\ell$ even]
Since $\ell$ is even, it suffices to prove the lemma for $\Sigma^k\times\Sigma^\ell$ replaced by $\Sigma^\ell\times\Sigma^k$.
Take an embedding $f$ given by Lemma \ref{l:ss}.b.
Take the equivariant map $f^{\times2}_\Delta:(F_-)^{\times2}_\Delta\to S^{k+\ell}$.
We have $|\deg f^{\times2}_\Delta|_{\Sigma^\ell\times\Sigma^k}| = |\lk f| = 1$.
We may assume that the degree is $+1$, otherwise we make the construction below replacing $z$ by $z+1$.
We shall modify the map $f^{\times2}_\Delta$ on the $(k+\ell)$-skeleton of $(F_-)^{\times2}_\Delta$ as follows.

For an integer $a$, an equivariant map $\Psi:(F_-)^{\times2,(k+\ell)}_\Delta\to S^{k+\ell}$ and oriented simplices $\sigma,\tau$ of $F_-$ the sum of whose dimensions is $k+\ell$ denote by $\Psi_{\sigma,\tau,a}:(F_-)^{\times2,(k+\ell)}_\Delta\to S^{k+\ell}$ any equivariant map obtained by the following construction (in fact, this construction produces a map $\Psi_{\sigma,\tau,a}$ well-defined up to homotopy).
Define $\Psi_{\sigma,\tau,a}|_{\sigma\times\tau}$ to be the connected sum of  $\Psi|_{\sigma\times\tau}$ and a map $S^{k+\ell}\to S^{k+\ell}$ of degree $a$.
(In other words, define $\Psi_{\sigma,\tau,a}|_{\sigma\times\tau}$ to be the composition
$\sigma\times\tau\overset c\to\sigma\times\tau\vee S^{k+\ell}\overset{\Psi\vee a}\to S^{k+\ell}$, where $c$ is the contraction of certain $(k+\ell-1)$-sphere in the interior of $\sigma\times\tau$ and $a$ is a map of degree $a$.)
Define $\Psi_{\sigma,\tau,a}|_{\tau\times\sigma}(x,y):=-\Psi_{\sigma,\tau,a}|_{\sigma\times\tau}(y,x)$.
Define $\Psi_{\sigma,\tau,a}=\Psi$ elsewhere.
We write that $\Psi_{\sigma,\tau,a}$ is obtained from $\Psi$ by {\it the modification  $(\sigma,\tau,a)$.}

For a sequence $(a_0,\ldots,a_s)$ of distinct elements of $\{0,1,\ldots,k+\ell+3\}$ denote by $(a_0,\ldots,a_s)$
the oriented $s$-simplex of $F$ with vertices $a_0,\ldots,a_s$ and the orientation induced by this order.
For $m=\ell, \ldots,k$ and $j=0,\ldots,\ell $ let
$$\sigma_m = (1,\ldots,m+1),\quad\tau_m = (m+3,\ldots,k+\ell+3),$$
$$\phi_{2j+1} = (0,\ldots,j,k+j+3,\ldots,k+\ell+3),\quad \psi_{2j+1} = (-1)^{j(j+1)}(j+2,j+3,\ldots,k+1+j),$$
$$\phi_{2j+2} = (0,\ldots,j,k+j+4,\ldots,k+\ell+3),\quad \psi_{2j+2} = (-1)^{(j+1)^2}(j+2,j+3,\ldots,k+2+j).$$
Let $\Phi'$ be a map obtained from $f^{\times2}_\Delta|_{(F_-)^{\times2,(k+\ell)}_\Delta}$ by the modifications

$\bullet$ $(\sigma_\ell ,\tau_\ell ,z)$,

$\bullet$ $(\sigma_m,\tau_m,(-1)^mz)$ for $m=\ell+1,\ldots,k$,

$\bullet$ $(\phi_{2j+1},\psi_{2j+1},(-1)^{j+1} z)$ for $j=0,\ldots,\ell $ and

$\bullet$ $(\phi_{2j+2},\psi_{2j+2},(-1)^j z)$ for $j=0,\ldots,\ell $.

It suffices to prove  that $\Phi'$ equivariantly extends to $(F_-)^{\times2}_\Delta$ and $\deg\Phi'|_{\Sigma^\ell\times\Sigma^k}=2z+1$.
\end{proof}

\begin{proof}[Proof that $\deg\Phi'|_{\Sigma^\ell\times\Sigma^k}=2z+1$]
Clearly,
$$\deg\Psi_{\sigma,\tau,a}|_{\Sigma^\ell\times\Sigma^k}-\deg\Psi|_{\Sigma^\ell\times\Sigma^k} = a[\Sigma^\ell\times\Sigma^k:\sigma\times\tau] = a[\Sigma^\ell:\sigma][\Sigma^k:\tau].$$
Of the above bullet points modifications this is non-zero only for $(\sigma_\ell,\tau_\ell,z)$ and $(\phi_{2\ell+2},\psi_{2\ell+2},(-1)^\ell z)$, when this is $z$ and $z$ respectively.
Hence
$\deg\Phi'|_{\Sigma^\ell\times\Sigma^k}=z+z+\deg f^{\times2}_\Delta|_{\Sigma^\ell\times\Sigma^k}=2z+1$.
\end{proof}

\begin{proof}[Proof that $\Phi'$ equivariantly extends to $(F_-)^{\times2}_\Delta$]
In the rest of this proof $\alpha,\beta$ are disjoint simplices of $F_-$ the sum of whose dimensions is $k+\ell+1$.
Then either

$\bullet$ $\alpha,\beta\subset[k+\ell+3]$ and $\alpha\sqcup\beta=[k+\ell+3]$, or

$\bullet$ $\dim\alpha=\ell+1$, $0\in\alpha$, $\beta\in {[k+\ell+3]\choose k+1}$, and $(k+\ell+3)-|\alpha|-|\beta|=1$.

Hence in the above bullet points modifications

$\bullet$ $\sigma_\ell \times\tau_\ell \subset\alpha\times\beta$ only if  $\alpha\times\beta=\sigma_{\ell+1}\times\tau_\ell $;

$\bullet$ for $m=\ell+1,\ldots,k-1$ we have
$\sigma_m\times\tau_m\subset\alpha\times\beta$ only if $\alpha\times\beta\in\{\sigma_m\times\tau_{m-1},\sigma_{m+1}\times\tau_m\}$;

$\bullet$ $\sigma_k\times\tau_k\subset\alpha\times\beta$ only if
$\alpha\times\beta\in\{\sigma_k\times\tau_{k-1},\phi_1\times\psi_0\}$;

$\bullet$ for $j=0,1,\ldots,\ell $ we have $\phi_{2j+1}\times\psi_{2j+1}\subset\alpha\times\beta$ only if $\alpha=\varphi_{2j+1}$, $\beta\in\{\psi_{2j},\psi_{2j+2}\}$;

$\bullet$ for $j=0,1,\ldots,\ell -1$ we have $\phi_{2j+2}\times\psi_{2j+2}\subset\alpha\times\beta$ only if   $\alpha\in\{\varphi_{2j+1},\varphi_{2j+3}\}$, $\beta=\psi_{2j+2}$;

$\bullet$ $\phi_{2\ell+2}\times\psi_{2\ell+2}\subset\alpha\times\beta$ only if $\alpha\times\beta=\varphi_{2\ell+1}\times\psi_{2\ell+2}$.


Now assume that $\alpha,\beta$ are oriented.
We consider the product orientations on cells of $(F_-)^{\times2}_\Delta$.
It suffices to prove that for each cell $\alpha\times\beta$ appearing in the above list  $\deg\Phi'|_{\partial(\alpha\times\beta)}=0$.
Observe that $\deg f^{\times2}_\Delta|_{\partial(\alpha\times\beta)}=0$.
We define boundary so that $[\partial(a_0,a_1,\ldots,a_m):(a_1,\ldots,a_m)]=+1$.
Then for $\alpha\supset\sigma,\beta\supset\tau$ we have
$$\deg\Psi_{\sigma,\tau,a}|_{\partial(\alpha\times\beta)}-\deg\Psi|_{\partial(\alpha\times\beta)} = a[\partial(\alpha\times\beta):\sigma\times\tau] =
a\left([\partial\alpha:\sigma][\beta:\tau]+ (-1)^{\dim\alpha}[\alpha:\sigma][\partial\beta:\tau]\right).$$
The restriction to $\partial(\sigma_m\times \tau_{m-1})$, for $m=\ell+1,\ldots,k$, receives two modifications in  the construction of $\Phi'$: on $\sigma_{m-1}\times\tau_{m-1}$ and on $\sigma_m\times\tau_m$.
Hence
$$\deg\Phi'|_{\partial(\sigma_m\times \tau_{m-1})} -
\deg f^{\times2}_\Delta|_{\partial(\sigma_m\times \tau_{m-1})}=$$
$$=(-1)^{m-1}z [\partial\sigma_m:\sigma_{m-1}][\tau_{m-1}:\tau_{m-1}]+
(-1)^{m}z (-1)^{m}[\sigma_m:\sigma_m][\partial\tau_{m-1}:\tau_m] =$$
$$=(-1)^{m-1+m}z + (-1)^{m+m}z =0.$$
In order to work with the signs in the sequel, it is convenient to rewrite the definitions of $\psi_*$  with different order of vertices and no signs in front:
$$
\psi_{2j+1} = (k+2,\ldots,k+1+j,j+2,\ldots,k+1),\quad \psi_{2j+2} = (k+2,\ldots,k+2+j,j+2,\ldots,k+1).
$$
Observe that $\psi_0 = \sigma_k$.
Hence the restriction to $\partial(\phi_1\times \psi_0)$ receives two modifications in the construction of $\Phi'$: on $\sigma_k\times\tau_k$ and on $\phi_1\times\psi_1$.
Hence
$$\deg\Phi'|_{\partial(\phi_1\times \psi_0)} - \deg f^{\times2}_\Delta|_{\partial(\phi_1\times \psi_0)} =$$
$$=-z(-1)^{\ell+1}(-1)^{\ell+1}[\partial\phi_1:\tau_k][\psi_0:\sigma_k]- z(-1)^{\ell+1}[\phi_1:\phi_1][\partial\psi_0:\psi_1]
=-z+z=0.$$
The restriction to $\partial(\phi_{2j+1}\times\psi_{2j+2})$, for $j=0, \ldots, l$, receives two modifications in the construction of $\Phi'$: on $\phi_{2j+1}\times\psi_{2j+1}$ and on $\phi_{2j+2}\times\psi_{2j+2}$.
Hence
$$\deg\Phi'|_{\partial(\phi_{2j+1}\times\psi_{2j+2})} -
\deg f^{\times2}_\Delta|_{\partial(\phi_{2j+1}\times\psi_{2j+2})}=$$
$$=(-1)^{j+1} z (-1)^{\ell+1}[\phi_{2j+1}:\phi_{2j+1}][\partial\psi_{2j+2}:\psi_{2j+1}] +
(-1)^j z [\partial\phi_{2j+1}:\phi_{2j+2}][\psi_{2j+2}:\psi_{2j+2}]=$$
$$= (-1)^{j+1+\ell+1+j}z + (-1)^{j+1+j}z=0.$$
The restriction to $\partial(\phi_{2j+1}\times \psi_{2j})$, for $j=1, \ldots, l$, receives two modifications in the construction of $\Phi'$: on $\phi_{2j}\times\psi_{2j}$ and on $\phi_{2j+1}\times\psi_{2j+1}$.
Hence
$$\deg\Phi'|_{\partial(\phi_{2j+1}\times\psi_{2j})} -
\deg f^{\times2}_\Delta|_{\partial(\phi_{2j+1}\times\psi_{2j})}=$$
$$=(-1)^{j+1} z [\partial\phi_{2j+1}:\phi_{2j}][\psi_{2j}:\psi_{2j}]+(-1)^{j+1} z (-1)^{\ell+1}[\phi_{2j+1}:\phi_{2j+1}][\partial\psi_{2j}:\psi_{2j+1}] =$$
$$=(-1)^{j+1+j}z +(-1)^{j+1+\ell+1+j}z=0.$$
Thus indeed for each cell $\alpha\times\beta$ appearing in the above list  $\deg\Phi'|_{\partial(\alpha\times\beta)}=0$.
\end{proof}

}

{\it In this list books, surveys and expository papers are marked by stars}

\end{document}